\documentclass{article}
\usepackage{amsmath,amssymb,amsthm}
\usepackage{geometry}
\usepackage{graphicx}
\usepackage{authblk}
\usepackage[numbers,sort&compress]{natbib}
\usepackage[colorlinks=true,linkcolor=blue,citecolor=blue,urlcolor=blue]{hyperref}

\newtheorem{theorem}{Theorem}
\theoremstyle{definition}

\theoremstyle{remark}

\newcommand{\U}{\mathbf U}
\newcommand{\V}{\bar{\mathbf U}}
\newcommand{\A}{\mathbf A}
\newcommand{\B}{\bar{\mathbf A}}
\newcommand{\rank}{\operatorname{rank}}
\newcommand{\Sp}{\mathcal S_p}

\title{\large \bf A note on the rectangular Schoenberg--Whitney\\
theorem for clamped B-splines}
\author[a,*]{Shutao Tang}
\author[b]{Zixuan Zhao}
\affil[a]{\small Department of Mechanical Engineering, Northwestern Polytechnical University, Xi'an, China}
\affil[b]{\small The University of Chicago, 5801 S Ellis Ave, Chicago, IL 60637, United States}
\date{}

\begin{document}
\maketitle

\begin{abstract}
Wang et al.~(\emph{Computer-Aided Design} 40 (2008) 999--1008) proposed a knot-subsequence characterization of full row rank for
rectangular B-spline collocation matrices. We identify an incorrect inequality in their
Appendix proof, which invalidates the argument as written, and make explicit the endpoint
and multiplicity conventions on which the criterion depends. The defect lies in the proof,
not in the underlying result, which
is valid under suitably stated hypotheses; in the lofting construction of Wang et al. these
hypotheses are implicitly satisfied, so the lofted curves are unaffected. We state the
corrected result and give a new
proof, independent of the disputed argument, for clamped B-splines of arbitrary degree
with continuous interior multiplicities. 
In our proof, sufficiency follows from inclusion of spline spaces and knot insertion; 
necessity selects independent columns and builds the reduced knot
vector from them. The proof uses no recursive column-selection construction.

\vspace{0.5em}
\noindent\textbf{Keywords:} B-spline interpolation; Schoenberg--Whitney condition;
rectangular collocation matrix; knot subsequence; clamped knots
\end{abstract}
\section{Introduction}

A common task in geometric modeling is constructing a B-spline
curve (or surface) that passes through prescribed data while the degree and the knot
vector have already been fixed. Expressed in the B-spline basis, the task becomes a linear
system, and the practical question is whether enough independent spline functions are
available to accommodate every possible set of values. In other words, whether the system is
solvable for every data set is a question about the rank of the collocation matrix.

When the number of basis functions equals the number of data sites, the classical
Schoenberg--Whitney condition~\cite{deboor2001} characterizes the nonsingularity of
the square collocation matrix. Wang et al.~\cite{wang2008reducing} formulated an extension to the
rectangular case: a collocation matrix has full row rank precisely when a shorter knot
vector, selected from the occurrences of the original knot vector, satisfies the square
condition. This extension is the key to energy-minimizing B-spline lofting~\cite{park2003lofted}, where one common knot
vector must interpolate a whole family of section curves.

However, the proof of this claim in~\cite{wang2008reducing} is not valid as written,
and some of its conditions must be made explicit for the statement to be rigorous. The
underlying result is valid under suitably stated hypotheses, and Section~\ref{sec:proof}
establishes it by a different argument.
The present paper makes three contributions.\footnote{This work grew out of an attempt to
formalize the proof of~\cite{wang2008reducing} in the Lean proof assistant; the
formalization uncovered the defect reported below and led to the present paper.}
\begin{itemize}
\item \textbf{Error localization.} The proof in the Appendix of~\cite{wang2008reducing} is not
valid as written: it deduces a false inequality, and its treatment of clamped endpoints
and of repeated knots also requires qualification.
\item \textbf{A correct statement.} We state the corrected theorem with all hypotheses made
explicit: clamped knots and interior-knot multiplicities bounded by the degree.
\item \textbf{A new proof.} We give a proof independent of the disputed argument that does
not use its recursive column-selection construction. Sufficiency follows from
inclusion of spline spaces and knot insertion. Necessity selects independent columns and
then constructs the reduced knot vector.
\end{itemize}

The remainder of this paper is organized as follows. Section~\ref{sec:notation} states the
interpolation problem and introduces the clamped B-spline setting and notation.
Section~\ref{sec:revisited} revisits the published argument, identifying the incorrect
inequality and the convention issues with explicit counterexamples.
Section~\ref{sec:corrected} states the corrected theorem. Section~\ref{sec:proof} gives the
new proof. Section~\ref{sec:conclusion} concludes the paper.

\section{Problem definition and preliminaries}
\label{sec:notation}

\subsection{Setting and notation}

We adopt the notation of Wang et al.~\cite{wang2008reducing}. 
Fix integers $1\le p\le s\le m$ (with $p$ the spline degree) and distinct \emph{data parameters}
\[
\mathbf T=\{t_0,\dots,t_s\},\qquad 0=t_0<t_1<\cdots<t_s=1,
\]
and let $\mathbf Q_0,\dots,\mathbf Q_s$ be the corresponding data points. Choose a
clamped knot vector
\[
\U=(u_0\le\cdots\le u_{m+p+1}),\qquad
u_0=\cdots=u_p=0<u_{p+1},\qquad u_{m}<u_{m+1}=\cdots=u_{m+p+1}=1,
\]
whose interior knots lie in $(0,1)$ and have multiplicity at most $p$.
Let $N_{i,p}^{\U}$ be the degree-$p$ B-spline basis functions defined on the clamped knot vector $\U$, and let
\[
\Sp(\U)=\operatorname{span}\{N_{0,p}^{\U},\dots,N_{m,p}^{\U}\}
\]
be the associated spline space. The \emph{collocation matrix} (also called the
interpolation matrix) is
\[
\A=\bigl[N_{i,p}^{\U}(t_j)\bigr]_{\substack{0\le j\le s\\0\le i\le m}}
\in\mathbb R^{(s+1)\times(m+1)}.
\]
With control points $\mathbf P=[\mathbf P_0,\dots,\mathbf P_m]^{\!\top}$ and data
$\mathbf Q=[\mathbf Q_0,\dots,\mathbf Q_s]^{\!\top}$, interpolation means
\[
\A\mathbf P=\mathbf Q.
\]
When $s=m$, there is exactly one interpolating B-spline curve if and only if $\mathbf T$ and
$\U$ satisfy the Schoenberg--Whitney condition~\cite{deboor2001}. When $s<m$ the system is under-determined and a strain-energy minimizer~\cite{park2003lofted}
selects one interpolant; such constructions require the interpolation matrix $\A$ to have full row rank.

Unless explicitly stated otherwise in the examples of
Section~\ref{sec:revisited}, we use the continuous-spline convention: all interior multiplicities are at
most $p$, and at $t=1$ the basis is evaluated by its left limit. In particular,
\begin{equation}\label{eq:endpoint}
N_{i,p}^{\U}(0)=\delta_{i0},\qquad N_{i,p}^{\U}(1)=\delta_{im}.
\end{equation}
Here $\delta_{ij}$ denotes the Kronecker delta ($\delta_{ij}=1$ if $i=j$, and $0$
otherwise).

\subsection{The published statement}

In the setting introduced above, the claim under study is the following.

\begin{quote}
\hrule
\emph{Published statement (Wang et al.~\cite{wang2008reducing}).} The interpolation matrix
$\A$ is full-rank if and only if there exists a knot vector
$\V=\{\bar u_0,\dots,\bar u_{s+p+1}\}$ with $\bar u_i\in\U$ such that $\mathbf T$ and $\V$
satisfy the Schoenberg--Whitney condition.
\vspace{1.3mm}
\hrule
\end{quote}

By a \emph{reduced} knot vector we mean a clamped vector
\[
\V=(\bar u_0\le\cdots\le\bar u_{s+p+1}),\qquad
\bar u_0=\cdots=\bar u_p=0,\qquad
\bar u_{s+1}=\cdots=\bar u_{s+p+1}=1,
\]
obtained from $\U$ by deleting $m-s$ interior knot occurrences. Here
\emph{occurrence subsequence} means that order is preserved and that no knot value is
used more often than it occurs in $\U$; mere membership $\bar u_k\in\U$ does not express
this restriction. Deleting knot occurrences produces a spline subspace:
\[
\Sp(\V)\subseteq\Sp(\U).
\]
For a reduced vector $\V$ we write $\B=[N_{i,p}^{\V}(t_j)]_{j,i=0}^{s}$ for its square
collocation matrix.

With these notions in place, two conventions of the published statement must be made
explicit. First, the membership $\bar u_i\in\U$ is too
weak: it must mean that $\V$ is an \emph{occurrence subsequence} of $\U$, so that no knot
value is used more often than it occurs in $\U$. Second, the Schoenberg--Whitney condition
is read with the endpoint convention \eqref{eq:endpoint}; strict inequalities are imposed
only for the interior indices $1\le i\le s-1$, since the endpoint inequalities are
impossible for clamped knots.

\section{The published argument revisited}
\label{sec:revisited}

We now assess the published statement of Wang et al.~\cite{wang2008reducing} and its
Appendix proof. That proof contains an incorrect inequality, and its treatment of
endpoints and repeated knots also requires qualification.

\subsection{The conditions and their conventions}
\label{revisit:conditions}

In~\cite{wang2008reducing}, $P$ denotes the assertion that the interpolation matrix $\A$ has full row rank. 
Their Appendix rewrites the selected-knot condition as
\begin{equation}
\label{revisit:Q}
 \begin{aligned}
 Q:\quad &\exists\ f:\{0,\dots,s+p+1\}\longrightarrow
 \{0,\dots,m+p+1\}\ \text{strictly increasing},\\
 &u_{f(i)}<t_i<u_{f(i+p+1)}\quad(0\le i\le s),
 \end{aligned}
\end{equation}
and introduces
\begin{equation}
\label{revisit:R}
 \begin{aligned}
 R:\quad &\exists\ g:\{0,\dots,s\}\longrightarrow
 \{0,\dots,m\}\ \text{strictly increasing},\\
 &u_{g(i)}<t_i<u_{g(i)+p+1}\quad(0\le i\le s).
 \end{aligned}
\end{equation}
Selection here concerns knot \emph{occurrences} (Section~\ref{sec:notation}): the
multiplicity of a selected value cannot exceed its original multiplicity.

Conditions \eqref{revisit:Q}--\eqref{revisit:R} impose strict inequalities at every site.
In the standing endpoint-inclusive setting they are impossible: in particular,
$u_{f(0)}<t_0=0$ cannot hold. This conflicts with the interior-index treatment of the
square condition already used in Section~2 of~\cite{wang2008reducing}, where the two
endpoint inequalities are replaced by the clamped-endpoint convention.

For clarity, denote the endpoint-adjusted conditions by
\begin{equation}
\label{revisit:Q-cl}
 \begin{aligned}
 Q_{\mathrm{cl}}:\quad&
 \exists\ f:\{0,\dots,s+p+1\}\longrightarrow
 \{0,\dots,m+p+1\}\ \text{strictly increasing},\\
 &\V=(u_{f(0)},\dots,u_{f(s+p+1)})\ \text{clamped},\\
 &u_{f(i)}<t_i<u_{f(i+p+1)}\quad(1\le i\le s-1),
 \end{aligned}
\end{equation}
and
\begin{equation}
\label{revisit:R-cl}
 \begin{aligned}
 R_{\mathrm{cl}}:\quad&
 \exists\ g:\{0,\dots,s\}\longrightarrow
 \{0,\dots,m\}\ \text{strictly increasing},\\
 &g(0)=0,\quad g(s)=m,\\
 & u_{g(i)}<t_i<u_{g(i)+p+1}\quad(1\le i\le s-1).
 \end{aligned}
\end{equation}
Here clamping of $\V$ means retaining $p+1$ occurrences of each endpoint. These are
our explicit conventions, not the Appendix's literal all-indices formulations.

\subsection{The disputed construction}
\label{revisit:construction}

Given $f$, the Appendix introduces the local shifts and the recurrence in
equation~(11):
\begin{equation}
\label{revisit:recurrence}
\begin{aligned}
      j_i  & =\min\{h\ge0:u_{f(i)+h}<t_i<u_{f(i)+h+p+1}\}, \\
      a_i  & =f(i)+j_i,                                    \\
      g(0) & =a_0, \qquad g(i) =\max\{a_i,g(i-1)+1\} \quad(i>0).
\end{aligned}
\end{equation}
The minimum must be over nonnegative integers with valid knot indices, and its defining
set must first be shown nonempty.

In part~(1.3)(c), $c$ is the distance backwards from $k$ to the first index satisfying
$g(k-c)=a_{k-c}$. The final inequality chain includes
\begin{equation}
\label{revisit:false-chain}
 u_{f(k-c)+j_{k-c}+p+1}\le u_{f(k-c)}.
\end{equation}
This is incompatible with the defining support inequalities. By the induction hypothesis
of part~(1.3)(b), $u_{g(i)}<t_i<u_{g(i)+p+1}$ for every $i<k \; (k>1)$; since $k-c<k$,
substituting $i$ with $k-c$, we have
\[
 u_{g(k-c)}<t_{k-c}<u_{g(k-c)+p+1}.
\]
Using $g(k-c)=a_{k-c}$ and $a_{k-c}=f(k-c)+j_{k-c}$, this becomes
\[
 u_{f(k-c)}\le u_{a_{k-c}}<t_{k-c}<u_{a_{k-c}+p+1},
 \qquad\text{whence}\qquad
 u_{f(k-c)+j_{k-c}+p+1}>u_{f(k-c)},
\]
the reverse of \eqref{revisit:false-chain}, a contradiction.

To accommodate endpoint sites, we replace the minimum in
\eqref{revisit:recurrence} by the following endpoint-aware version:
\begin{equation}
\label{revisit:recurrence-cl}
 \begin{aligned}
 H_i&=\{h\ge0:  N_{f(i)+h,p}^{\U}(t_i)>0\},\\
 j_i&=\min H_i,\qquad a_i=f(i)+j_i,\\
 g(0)&=a_0,\qquad g(i)=\max\{a_i,g(i-1)+1\}\quad(i>0).
 \end{aligned}
\end{equation}
In the continuous class, the positivity test agrees with the original strict support
test at interior sites. At $0$ and $1$ it uses \eqref{eq:endpoint}. This is an
explicit modification of the construction of Wang et al.~\cite{wang2008reducing}.

We now give a concrete, normalized, clamped input that reaches the relevant branch:
\begin{equation}
      \label{revisit:reachable-example}
      \begin{aligned}
            p         & =1,\quad s=4,\quad m=5,                               \\
            \U        & =(0,0,\tfrac14,\tfrac12,\tfrac34,\tfrac{15}{16},1,1), \\
            \mathbf T & =(0,\tfrac18,\tfrac{13}{16},\tfrac78,1),\qquad
            f=(0,1,2,3,5,6,7).
      \end{aligned}
\end{equation}
The selected vector is
\[
 \V=(u_{f(0)},\dots,u_{f(6)}) = (0,0,\tfrac14,\tfrac12,\tfrac{15}{16},1,1),
\]
which is a clamped occurrence subsequence. It satisfies $Q_{\mathrm{cl}}$, since
\begin{align*}
 0=u_{f(1)} &< t_1=\tfrac18 < u_{f(3)}=\tfrac12,\\
 \tfrac14=u_{f(2)} &< t_2=\tfrac{13}{16} < u_{f(4)}=\tfrac{15}{16},\\
 \tfrac12=u_{f(3)} &< t_3=\tfrac78 < u_{f(5)}=1.
\end{align*}

\begin{table}[htbp]
      \centering
      \caption{The endpoint-adjusted construction \eqref{revisit:recurrence-cl} for the
            counterexample \eqref{revisit:reachable-example}.}
      \label{tab:reachable}
      \begin{tabular}{crrrrr}
            \hline
            $i$ & $f(i)$ & $t_i$   & $j_i$ & $a_i$ & $g(i)$ \\
            \hline
            0   & 0      & $0$     & 0     & 0     & 0      \\
            1   & 1      & $1/8$   & 0     & 1     & 1      \\
            2   & 2      & $13/16$ & 1     & 3     & 3      \\
            3   & 3      & $7/8$   & 0     & 3     & 4      \\
            4   & 5      & $1$     & 0     & 5     & 5      \\
            \hline
      \end{tabular}
\end{table}

All the shift sets are nonempty. At the endpoints $H_0=H_4=\{0\}$; in the interior,
$H_1=\{0\}$, $H_2=\{1,2\}$, and $H_3=\{0,1\}$. Thus
Table~\ref{tab:reachable} records the actual minimal shifts and recurrence values.

At $k=3$, $g(3)=4>a_3=3$. The first backward anchor is $g(2)=a_2=3$, so $c=1$, and we have $f(k-c)=f(2)=2$ and
$j_{k-c}=j_2=1$, so the left index of \eqref{revisit:false-chain} is
\[
f(k-c)+j_{k-c}+p+1=2+1+1+1=5,
\]
and the inequality reads $u_5\le u_2$, contradicting the fact that the
knot vector $\U$ is nondecreasing. The resulting
$g$ nevertheless satisfies $R_{\mathrm{cl}}$. Thus the example refutes the displayed inequality.

The collocation matrices are
\[
 \A=
 \begin{pmatrix}
       1   & 0   & 0 & 0   & 0   & 0 \\
       1/2 & 1/2 & 0 & 0   & 0   & 0 \\
       0   & 0   & 0 & 2/3 & 1/3 & 0 \\
       0   & 0   & 0 & 1/3 & 2/3 & 0 \\
       0   & 0   & 0 & 0   & 0   & 1
 \end{pmatrix},
 \qquad
 \B=
 \begin{pmatrix}
       1   & 0   & 0   & 0   & 0 \\
       1/2 & 1/2 & 0   & 0   & 0 \\
       0   & 0   & 2/7 & 5/7 & 0 \\
       0   & 0   & 1/7 & 6/7 & 0 \\
       0   & 0   & 0   & 0   & 1
 \end{pmatrix}.
\]
The minor of $\A$ in columns $0,1,3,4,5$ has determinant $1/6$, and
$\det\B=1/14$. Thus $\A$ has full row rank and $\B$ is nonsingular.
The endpoint fix therefore does not repair the displayed local inequality.

A further local correction is straightforward: the sentence preceding part~(1.1)
compares $t_i$ with the index $f(i+p+1)$ rather than the knot value $u_{f(i+p+1)}$.
Fixing this typo, however, does not repair the gap at \eqref{revisit:false-chain}.

\subsection{Endpoints and full-multiplicity knots}
\label{revisit:boundaries}

An endpoint example within the standing setting makes the obstruction concrete:
for $p=1$, $\U=(0,0,1,1)$, and $\mathbf T=(0,1)$, the collocation matrix is the
identity while the all-indices conditions $Q$ and $R$ are impossible.

We next temporarily admit interior knots of multiplicity $p+1$. These stress tests
leave the continuous-spline class. We use right-continuous values at interior breaks
and the left-limit extension at the final endpoint. For the first test we also omit
endpoint data sites:
\begin{equation}
\label{revisit:empty-minimum}
 p=1,\quad \U=(0,0,\tfrac12,\tfrac12,1,1),\quad
 \mathbf T=(\tfrac14,\tfrac12,\tfrac34),\qquad
 \V=(0,0,\tfrac12,1,1).
\end{equation}
$Q$ holds with $f=(0,1,2,4,5)$, and $\V$ is a clamped occurrence subsequence.
At the middle site, however, every full-vector open support is either $(0,1/2)$
or $(1/2,1)$. None contains $1/2$, so $R$ is false and the minimum in
\eqref{revisit:recurrence} does not exist. The collocation matrix is
\[
 \begin{pmatrix}
       1/2 & 1/2 & 0   & 0   \\
       0   & 0   & 1   & 0   \\
       0   & 0   & 1/2 & 1/2
 \end{pmatrix}.
\]
Its minor in columns $0,2,3$ has determinant $1/4$. Thus $P$ and $Q$
are true, but $R$ is false. This disproves the intermediate equivalence
$Q\Longleftrightarrow R$ in this discontinuous setting; it does not disprove
$P\Longleftrightarrow Q$ on this example.

Restoring endpoint sites does not justify retaining strict interior support
inequalities at discontinuities. Consider
\begin{equation}
\label{revisit:two-breaks}
 p=1,\quad \U=(0,0,\tfrac13,\tfrac13,\tfrac23,\tfrac23,1,1),\qquad
 \mathbf T=(0,\tfrac13,\tfrac12,\tfrac23,1).
\end{equation}
With the specified right-continuous interior values, the collocation matrix has rows
$e_0,e_2,(e_2+e_3)/2,e_4,e_5$ in $\mathbb R^6$. For arbitrary data
$(y_0,\dots,y_4)$, the control vector
$(y_0,0,y_1,2y_2-y_1,y_3,y_4)$ gives those data, proving full row rank. The minor in
columns $0,2,3,4,5$ has determinant $1/2$.

A selected vector has seven entries. The strict interior inequalities at indices
$1$ and $3$ would require $\tfrac13<\bar u_3<\tfrac23$, although $\U$ contains no such value.
Thus the necessity direction of the endpoint-excepted strict-interior formulation
fails when full-multiplicity knots are admitted at data sites. This is not a
counterexample in the continuous class. Deleting one occurrence of $\tfrac13$
gives $\V=(0,0,\tfrac13,\tfrac23,\tfrac23,1,1)$ with square determinant $\tfrac12$ under the stated
evaluation convention, despite a failed strict inequality.

\subsection{Membership without multiplicity is insufficient}
\label{revisit:membership}

Even after correcting the endpoint convention, membership alone does not encode
multiplicity. Take
\[
 p=2,\quad m=s=5,\quad
 \U=(0,0,0,\tfrac1{10},\tfrac15,\tfrac12,1,1,1),\qquad
 \mathbf T = (t_0,\dots,t_5)=(0,\tfrac3{10},\tfrac25,\tfrac35,\tfrac7{10},1).
\]
The column of $N_{1,2}^{\U}$ is zero: its support ends at $u_4=1/5$, while all positive
data parameters are at least $3/10$ and its value at $0$ is zero. Thus the square matrix
$\A$ is singular. However,
\[
 \V=(0,0,0,\tfrac1{10},\tfrac12,\tfrac12,1,1,1)
\]
uses only values appearing in $\U$ and satisfies all four interior inequalities:
\[
 0<\tfrac3{10}<\tfrac12,\quad
 0<\tfrac25<\tfrac12,\quad
 \tfrac1{10}<\tfrac35<1,\quad
 \tfrac12<\tfrac7{10}<1.
\]
It uses $1/2$ twice instead of once, so it is not a subsequence of $\U$. This disproves
the sufficiency direction under literal set membership.

\section{The corrected theorem}
\label{sec:corrected}

The analysis above shows that the published statement is correct only once its
conventions are made explicit. 
We now give a precise continuous, clamped formulation of the intended selected-vector
criterion, with occurrence selection and endpoint evaluation specified explicitly.

\begin{theorem}[Corrected Theorem 1]\label{thm:main}
Let $1\le p\le s\le m$ and $0=t_0<t_1<\cdots<t_s=1$, and let $\U$ be a clamped knot vector
with
\[
 u_0=\cdots=u_p=0<u_{p+1},\qquad
 u_m<u_{m+1}=\cdots=u_{m+p+1}=1,
\]
and with every interior knot of multiplicity at most $p$. Let $\A$ and $\Sp(\U)$ be the
collocation matrix and spline space of Section~\ref{sec:notation}. Then $\A$ has full row
rank $s+1$ if and only if one can delete $m-s$ interior knot occurrences from $\U$ to obtain
\[
 \V=(\bar u_0\le\cdots\le\bar u_{s+p+1}),\qquad
 \bar u_0=\cdots=\bar u_p=0,\qquad
 \bar u_{s+1}=\cdots=\bar u_{s+p+1}=1,
\]
such that
\begin{equation}\label{eq:sw}
 \bar u_i<t_i<\bar u_{i+p+1},\qquad 1\le i\le s-1.
\end{equation}
Equivalently, $N_{i,p}^{\V}(t_i)>0$ for every $0\le i\le s$.
\end{theorem}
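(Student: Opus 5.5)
Sufficiency uses spline-space inclusion and the square Schoenberg--Whitney theorem; necessity extracts $s+1$ independent columns of $\A$ and builds $\V$ from them. Throughout we use the classical Schoenberg--Whitney theorem~\cite{deboor2001} for the square clamped case with the endpoint convention \eqref{eq:endpoint}: for a clamped vector with interior multiplicities at most $p$, the square matrix $\B$ is nonsingular if and only if $N_{i,p}^{\V}(t_i)>0$ for all $0\le i\le s$. At $i=0$ and $i=s$ this holds automatically by \eqref{eq:endpoint}, and at interior indices it is exactly \eqref{eq:sw}. This gives the stated equivalence.

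\textbf{Sufficiency.} Suppose a reduced occurrence subsequence $\V$ satisfies \eqref{eq:sw}. Deleting knots from $\U$ keeps interior multiplicities at most $p$, so the square theorem applies and $\B$ is nonsingular. Hence for every data vector $y\in\mathbb R^{s+1}$ there is $\sigma\in\Sp(\V)$ with $\sigma(t_j)=y_j$ for all $j$. By knot insertion (Boehm), $\Sp(\V)\subseteq\Sp(\U)$, and the inclusion respects the left-limit evaluation at $1$ because both spaces consist of polynomial pieces that are continuous on $[0,1]$. So $\sigma=\sum_i c_iN^{\U}_{i,p}$ for some coefficients $c$, and then $\A c=y$. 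Since $y$ was arbitrary, $\A$ is surjective, i.e.\ it has full row rank.

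\textbf{Necessity.} Suppose $\rank\A=s+1$. Choose columns $i_0<\cdots<i_s$ whose square minor is nonzero. By the square Schoenberg--Whitney theorem in its column-subset form (the Karlin--Ziegler / de Boor version for B-spline submatrices, where positivity on the diagonal is necessary and sufficient), the nonzero minor forces $N^{\U}_{i_k,p}(t_k)>0$ for every $k$. Every row at $t_0=0$ is $e_0$ and every row at $t_s=1$ is $e_m$, so the minor forces $i_0=0$ and $i_s=m$.

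Now define $\V$ by keeping the knot occurrences that witness these supports. At interior $k$, positivity gives $u_{i_k}<t_k<u_{i_k+p+1}$, so consecutive indices satisfy the gap condition needed below. The cleanest choice is to keep the index set
\[
F=\{0,\dots,p\}\cup\{i_k+\ell:\ 1\le k\le s-1\}\cup\{m+1,\dots,m+p+1\}
\]
for a suitable offset $\ell$. Rather than fixing $\ell$ in advance, I would build an explicit strictly increasing map $f:\{0,\dots,s+p+1\}\to\{0,\dots,m+p+1\}$ with $f(r)=r$ for $r\le p$, $f(s+1+r)=m+1+r$, and interior values chosen greedily, $f(k)\le i_k$ and $f(k+p+1)\ge i_k+p+1$ for $1\le k\le s-1$. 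Such an $f$ gives $\bar u_k=u_{f(k)}\le u_{i_k}<t_k<u_{i_k+p+1}\le u_{f(k+p+1)}=\bar u_{k+p+1}$, which is \eqref{eq:sw}. A strictly increasing $f$ is automatically an occurrence subsequence and is clamped by construction.

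The natural candidate is $f(k)=i_k$ for $0\le k\le s$, extended by $f(s+1+r)=m+1+r$. It is strictly increasing because the $i_k$ increase and $i_s=m$. We need $f(k+p+1)\ge i_k+p+1$, i.e.\ $i_{k+p+1}\ge i_k+p+1$ when $k+p+1\le s$, which holds since the $i$'s strictly increase. When $k+p+1>s$, the index is $m+1+(k+p-s)\ge i_k+p+1$ because $i_k\le m-(s-k)$. The first $p+1$ entries must be $0$, but $f(1),\dots,f(p)=i_1,\dots,i_p$ need not index zeros. This is the main obstacle: reconciling the left clamp with the selected columns.

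To handle it, I would instead shift and set $f(k+p)=i_k+p$ for $0\le k\le s$, with $f(r)=r$ for $r\le p$ (consistent, as $i_0=0$), and $f(s+p+1)=m+p+1$. Then
\[
\bar u_k=u_{f(k)}\le u_{i_{k-p}+p}\ \text{(or $0$)},
\]
which needs $u_{i_k}\ge\bar u_k$ and $\bar u_{k+p+1}=u_{i_{k+1}+p}\ge u_{i_k+p+1}$. The second inequality holds, and the first holds because $f(k)\le i_k$, which follows from strict increase. The right clamp needs $f(s+1),\dots,f(s+p)$ to index the value $1$; this uses $i_s=m$ and the positions $i_{s-p+r}+p\ge m+1$, and it fails in general. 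I expect the final work to be a careful symmetric choice, taking left-anchored indices for $k\le s/2$-type positions, or more simply padding by taking the $p$ last entries forcibly as $m+1,\dots,m+p$ and checking that the inequality $f(k+p+1)\ge i_k+p+1$ survives via $i_k\le m-(s-k)$. Verifying these index inequalities at both clamps is the one delicate step.
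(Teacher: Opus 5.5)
Your sufficiency argument is essentially the paper's: spline-space inclusion via knot insertion, plus the square theorem. Citing the de Boor/Karlin--Ziegler submatrix theorem to get $N^{\U}_{i_k,p}(t_k)>0$ is an acceptable substitute for the paper's direct block-of-zeros argument. If you do cite it, first expand the minor along its first and last rows, which are unit vectors once $i_0=0$ and $i_s=m$. The theorem is then applied only at the interior sites, and the left-limit convention at $1$ never enters.

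\textbf{The gap.} You never complete the construction of $\V$, and that construction is the heart of necessity. Both of your explicit candidates fail, as you observe yourself:
\begin{itemize}
\item $f(k)=i_k$ breaks the left clamp.
\item The shift $f(k+p)=i_k+p$ breaks the right clamp, since $f(s+1)=i_{s+1-p}+p$ can be $\le m$.
\end{itemize}
The proposal then ends with an expectation rather than a verified choice of $f$. As written, necessity is not proved.

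\textbf{The fix.} The obstacle in your natural candidate is confined to positions $1,\dots,p$, and overriding just those entries removes it. Take $f(r)=r$ for $r\le p$, $f(k)=i_k$ for $p+1\le k\le s$, and $f(s+1+r)=m+1+r$ for $0\le r\le p$. That is,
\[
 \bar u_k=0\ (0\le k\le p),\qquad \bar u_k=u_{i_k}\ (p+1\le k\le s),\qquad \bar u_k=1\ (s+1\le k\le s+p+1).
\]
This works for the following reasons.
\begin{itemize}
\item Since $i_k\ge k$ and $i_s=m$, the retained indices $i_{p+1}<\cdots<i_s$ lie in $\{p+1,\dots,m\}$. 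So $f$ is strictly increasing, and exactly $s-p$ interior occurrences are kept.
\item The lower inequality $\bar u_k<t_k$ reads $0<t_k$ for $k\le p$, and $u_{i_k}<t_k$ otherwise.
\item The upper inequality only involves positions $k+p+1\ge p+2$, which the override does not touch. So it is either $1>t_k$, or $u_{i_{k+p+1}}\ge u_{i_k+p+1}>t_k$, exactly as in your own check.
\end{itemize}
This is the paper's construction.

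Your ``padding'' fallback would also work if applied to the shifted candidate and actually verified. Keep $f(k+p)=i_k+p$ only for $k\le s-p$ and force $f(s+1+r)=m+1+r$. The fact you need is $i_{s-p}+p\le i_s=m$, which keeps the interior indices below $m+1$. Either way, the symmetric $s/2$-type split you anticipate is unnecessary.
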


\noindent Each hypothesis in Theorem~\ref{thm:main} is essential; the examples in
Section~\ref{sec:revisited} show that none can be dropped.
\begin{itemize}
\item \emph{Endpoints.} Since $\bar u_0=t_0=0$ and $\bar u_{s+p+1}=t_s=1$, strict
inequalities at the two endpoints are impossible; the clamps must be handled by the
endpoint convention \eqref{eq:endpoint}, as in \eqref{eq:sw}.
\item \emph{Continuity.} If interior multiplicities equal to $p+1$ are admitted, a
B-spline can be nonzero at an interior endpoint of its support, and the strict
formulation can fail even when $\A$ has full row rank, as in the two-breaks example
\eqref{revisit:two-breaks}. The continuous setting with multiplicities at most $p$ is
therefore essential.
\item \emph{Multiplicity of the selection.} The reduced vector must be an occurrence
subsequence, not merely a vector with values in $\U$; the example in
Section~\ref{revisit:membership} shows that membership alone is too weak.
\end{itemize}
The normalization to $[0,1]$ is only a convention: an affine change of parameter gives
the corresponding statement on any nondegenerate interval $[a,b]$.

When Wang et al.~\cite{wang2008reducing} apply the criterion in their lofting construction,
the knot vector is clamped and the interior knot multiplicities are bounded by the degree
(there they equal $1$), so the hypotheses of Theorem~\ref{thm:main} are satisfied. The
criterion therefore holds there, and the lofted curves are unaffected by the defect in the
published proof.

\section{Proof}
\label{sec:proof}

We use three standard B-spline facts: support and positivity, knot insertion,
and the square Schoenberg--Whitney theorem
\cite{deboor2001}. In the setting of Theorem~\ref{thm:main},
\begin{equation}\label{eq:support}
 N_{i,p}^{\U}(t)>0\ \Longleftrightarrow\ u_i<t<u_{i+p+1}
 \quad(0<t<1),\quad
 N_{i,p}^{\U}(0)=\delta_{i0},\quad N_{i,p}^{\U}(1)=\delta_{im}.
\end{equation}

\begin{figure}[htbp]
\centering
\includegraphics[width=0.45\linewidth]{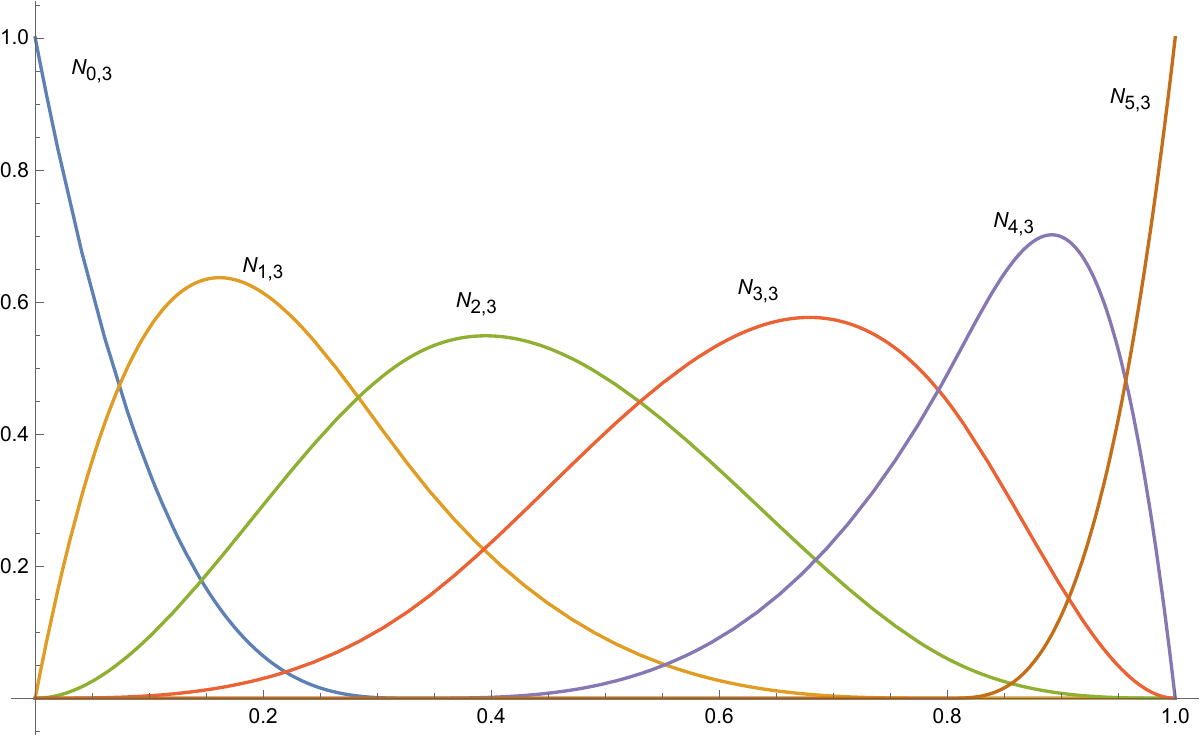}
\caption{The degree-3 B-splines $N_{0,3}^{\U}$ and $N_{5,3}^{\U}$ for the clamped knot vector
$\U=(0,0,0,0,\tfrac13,\tfrac45,1,1,1,1)$, illustrating \eqref{eq:support}: $N_{0,3}^{\U}$
is supported on $[0,\tfrac13]$ and equals $1$ at $t=0$, while
$N_{5,3}^{\U}$ is positive exactly on $(\tfrac45,1)$ and takes the value $1$ at $t=1$.}
\label{fig:basis}
\end{figure}

Figure~\ref{fig:basis} illustrates \eqref{eq:support} on a cubic knot vector.
By continuity, a B-spline vanishes at an interior endpoint of its support. The square
Schoenberg--Whitney theorem then states that the square collocation matrix on $\V$ is
nonsingular precisely when its diagonal entries are positive, with the endpoint convention
in \eqref{eq:support}.

\begin{proof}[Proof of Theorem~\ref{thm:main}]
We prove sufficiency and necessity separately.

\noindent \textbf{Sufficiency.}
Suppose such a $\V$ exists, and set
$\B=[N_{i,p}^{\V}(t_j)]_{j,i=0}^{s}$.
The endpoint values and \eqref{eq:sw} give a positive diagonal, so the square
Schoenberg--Whitney theorem makes $\B$ nonsingular. By knot insertion there
is a matrix $\mathbf C\in\mathbb R^{(m+1)\times(s+1)}$ satisfying
\[
 N_{i,p}^{\V}(t)=\sum_{\ell=0}^{m}C_{\ell i}N_{\ell,p}^{\U}(t),
 \qquad 0\le t\le1.
\]
Evaluation at the data parameters gives $\B=\A\mathbf C$. Hence
\[
 s+1=\rank\B=\rank(\A\mathbf C)\le\rank\A\le s+1,
\]
i.e., $\rank\A=s+1$.

\medskip
\noindent\textbf{Necessity.}
The proof is in two parts: (1)~select columns, (2)~construct the knots.

\medskip
\noindent\emph{(1) Select columns.}
Suppose $\rank\A=s+1$. Choose increasing column indices
$i_0<\cdots<i_s$ such that
\[
 \mathbf M=\bigl[N_{i_k,p}^{\U}(t_j)\bigr]_{j,k=0}^{s}
\]
is nonsingular. The first and last rows of $\A$ are the coordinate rows
$e_0^{\mathsf T}$ and $e_m^{\mathsf T}$. Thus $i_0=0$ and $i_s=m$.
We claim
\begin{equation}\label{eq:minor}
 u_{i_k}<t_k<u_{i_k+p+1},\qquad 1\le k\le s-1.
\end{equation}

We prove \eqref{eq:minor} by contradiction. If $t_k\le u_{i_k}$, then for $j\le k$ and $\ell\ge k$,
\[
 t_j\le t_k\le u_{i_k}\le u_{i_\ell}.
\]
All entries $M_{j\ell}$ in this block vanish by \eqref{eq:support}.
The possible endpoint $t_j=0$ causes no exception because $i_\ell\ge i_k>0$.
Consequently the first $k+1$ rows are supported in just the first $k$ columns;
they are dependent, contradicting nonsingularity.

If $t_k\ge u_{i_k+p+1}$, then for $j\ge k$ and $\ell\le k$,
\[
 t_j\ge t_k\ge u_{i_k+p+1}\ge u_{i_\ell+p+1}.
\]
Again all these entries vanish. At $t_j=1$ there is no exception because
$i_\ell\le i_k<m$. The last $s-k+1$ rows are supported in just the last
$s-k$ columns, another contradiction. This proves \eqref{eq:minor}.

\medskip
\noindent\emph{(2) Construct the knots.}
Define
\begin{equation}\label{eq:construction}
\displaystyle
 \bar u_k=
 \begin{cases}
 0,&0\le k\le p,\\
 u_{i_k},&p+1\le k\le s,\\
 1,&s+1\le k\le s+p+1.
 \end{cases}
\end{equation}
In particular, the retained interior knots are
$u_{i_{p+1}},\ldots,u_{i_s}$; this list is empty when $s=p$.
Since $i_k\ge k$ and $i_s=m$, their indices are distinct elements of
$\{p+1,\ldots,m\}$. Thus \eqref{eq:construction} is a clamped subsequence of
$\U$, with the required $s-p$ interior knot occurrences. It inherits the
interior multiplicity bound.

For $1\le k\le s-1$, the lower inequality $\bar u_k<t_k$ is automatic if
$k\le p$, and otherwise follows from \eqref{eq:minor}.
For the upper inequality, if $k+p+1\ge s+1$, then
$\bar u_{k+p+1}=1>t_k$. Otherwise, since $i_0<i_1<\cdots<i_s$ is a strictly
increasing sequence of integers, $i_{k+r}\ge i_k+r$ for every integer $r>0$; taking
$r=p+1$ gives $i_{k+p+1}\ge i_k+p+1$, whence
\[
 \bar u_{k+p+1}=u_{i_{k+p+1}}
       \ge u_{i_k+p+1}>t_k.
\]
Therefore \eqref{eq:sw} holds, completing the proof.
\end{proof}

The key step is \eqref{eq:construction}: selecting independent columns alone
does not produce a coarser knot vector, but this formula does. The necessity
argument uses only support and elementary linear algebra.

\section{Conclusions}
\label{sec:conclusion}

In this paper we have examined the rectangular Schoenberg--Whitney criterion of
Wang et al.~\cite{wang2008reducing}, which characterizes full row rank of a rectangular
B-spline collocation matrix and underlies energy-minimizing lofting on a common knot
vector. We have shown that the published proof is not valid as written: on an admissible
input one of its inequalities fails, and the endpoint and knot-multiplicity conventions on
which the criterion depends are left implicit. We have stated the corrected criterion for
clamped B-splines, with all hypotheses made explicit, and proved it independently:
sufficiency is a short space-inclusion argument based on knot insertion, while necessity
works by selecting independent columns and reconstructing the reduced knot vector. Unlike the
published argument, the new proof uses no recursive column-selection construction. The
criterion is therefore valid in the continuous, clamped, endpoint-aware setting, and each
of its hypotheses is essential.

% ----------------------------------

\end{document}